\documentclass[12pt]{article}
\usepackage{t1enc}
\usepackage[latin1]{inputenc}
\usepackage[english]{babel}
\usepackage{amsmath,amsthm}
\usepackage{amsfonts}
\usepackage{latexsym}
\usepackage[dvips]{graphicx}
\usepackage{graphicx}
\usepackage[natural]{xcolor}
\DeclareGraphicsRule{.wmf}{bmp}{}{}

\newtheorem{thm}{Theorem}[section]

\newtheorem{claim}{Claim}
\newtheorem{lem}[thm]{Lemma}

\newtheorem{conj}[thm]{Conjecture}

\newtheorem{con}[thm]{Conclusion}

\numberwithin{equation}{section}

%%%<<=======
\setlength{\parskip}{2pt}
%%%--------------Fonts-------------------
%% \usepackage{MinionPro}
% \usepackage[T1]{fontenc}
\usepackage[varg]{txfonts}
%%%--------------Graphic & Table---------
\usepackage{graphicx, epsfig, subfigure}
\usepackage{enumerate} %% ¸Ä±äÁÐ±í±êºÅÑùÊ½ºê°ü Æäºó¿É½ÓÑ¡Ïî[a,A,i,I,1]
\usepackage[square, numbers, sort&compress]{natbib} %% Ö§³ÖÒýÓÃµÄºê°ü

 % %%    Absolute value notation ¾ø¶ÔÖµµÄ¶¨Òå

\numberwithin{equation}{section}

\textwidth 16.5cm \textheight 22cm \topmargin 0 cm \hoffset -1.5 cm \voffset 0cm

\begin{document}
\title{A conjecture on equitable vertex arboricity of graphs\thanks{This research is supported by the National Natural Science Foundation of China (No. 11101243, 11201440).}}
\author{Xin Zhang\unskip\textsuperscript{a}\thanks{Email address: xzhang@xidian.edu.cn},\;Jian-Liang Wu\unskip\textsuperscript{b}\thanks{Email address: jlwu@sdu.edu.cn}\\[.5em]
{\small \textsuperscript{a}\unskip Department of Mathematics, Xidian University, Xi'an 710071, P. R. China}\\
{\small \textsuperscript{b}\unskip School of Mathematics, Shandong University, Jinan 250100, P. R. China}\\
}
\date{}
\maketitle

\begin{abstract}\baselineskip  0.6cm
Wu, Zhang and Li \cite{WZL} conjectured that the set of vertices of any simple graph $G$ can be equitably partitioned into $\lceil(\Delta(G)+1)/2\rceil$ subsets so that each of them induces a forest of $G$. In this note, we prove this conjecture for graphs $G$ with $\Delta(G)\geq |G|/2$.\\[.5em]
Keywords: Equitable vertex arboricity; Relaxed coloring; Tree coloring; Maximum degree
\end{abstract}

\baselineskip 0.6cm

\section{Introduction}

All graphs considered in this paper are finite, undirected, loopless and without multiple edges. For a graph $G$, we use $V(G)$, $E(G)$, $\delta(G)$ and $\Delta (G)$ to denote the vertex set, the edge set, the minimum degree and the maximum degree of $G$, respectively. By $\alpha'(G)$ and $G^c$, we denote the largest size of a matching in the graph $G$ and the completement graph of $G$. For other basic undefined concepts we refer the reader to
\cite{BM}.

The \emph{vertex-arboricity} $a(G)$ of a graph $G$ is the minimum number of subsets into which the vertex set $V(G)$ can be partitioned so that each subset induces a forest. This notation was first introduced by Chartrand, Kronk and Wall \cite{CKW} in 1968, who named it point-arboricity and proved that $a(G)\leq \lceil(\Delta(G)+1)/2\rceil$ for every graph $G$. Recently, Wu, Zhang and Li \cite{WZL} introduced the equitable version of vertex arboricity.
If the set of vertices of a graph $G$ can be equitably partitioned into $k$ subsets (i.e.\,\,the size of each subset is either $\lceil|G|/k\rceil$ or $\lfloor|G|/k\rfloor$) such that each subset of vertices induce a forest of $G$, then we call that $G$ admits an \emph{equitable $k$-tree-coloring}. The minimum integer $k$ such that $G$ has an equitable $k$-tree-coloring is the \emph{equitable vertex arboricity} $a_{eq}(G)$ of $G$. As an extension of the result of Chartrand, Kronk and Wall on vertex arboricity, Wu, Zhang and Li {\rm \cite{WZL}} raised the following conjecture and they proved it for complete bipartite graphs, graphs with maximum average degree less than 3, and graphs with maximum average degree less than $10/3$ and maximum degree at least 4.
\begin{conj}\label{wzlconj}
 $a_{eq}(G)\leq \lceil\frac{\Delta(G)+1}{2}\rceil$ for every simple graph $G$.
\end{conj}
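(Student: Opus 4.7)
The plan is to prove Conjecture~\ref{wzlconj} by strong induction on $n := |G|$, with $k := \lceil(\Delta(G)+1)/2\rceil$ throughout. The base case $n \leq 2k$ is immediate, since any equitable partition into $k$ parts then has each part of size at most $2$ and so induces a forest. For the inductive step I would split the argument into a dense regime and a sparse regime, the split being governed by whether $\Delta(G) \geq n/2$.

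For the dense regime $\Delta(G) \geq n/2$, every class of an equitable $k$-partition has size at most $\lceil n/k \rceil \leq 4$, so one can hope to construct the coloring essentially directly. Starting from any (not necessarily equitable) $k$-tree-coloring, which is guaranteed by the bound $a(G) \leq \lceil(\Delta(G)+1)/2\rceil$ of Chartrand, Kronk and Wall \cite{CKW}, I would rebalance it via matchings in the complement graph $G^c$: since $\Delta(G) \geq n/2$ forces $\delta(G^c) \leq n/2$ and hence $\alpha'(G^c)$ to be large, vertices of over-full classes can be exchanged with vertices of under-full classes along such a matching while preserving acyclicity, as the resulting classes remain of size at most $4$.

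For the sparse regime $\Delta(G) < n/2$, I would use deletion-based induction. Pick a vertex $v$ of minimum degree, apply the inductive hypothesis to $G - v$ to obtain an equitable $k$-tree-coloring $\varphi$, and note that by pigeonhole at least one class $C_i$ contains at most one neighbor of $v$, because $\deg(v)/k \leq \Delta(G)/k < 2$. Insert $v$ into such a $C_i$; the resulting class is still a forest. To restore equitability, I would perform a sequence of local tree-preserving transfers between an over-full and an under-full class, tracking the imbalance potential $\Phi := \sum_i (|C_i| - \lceil n/k\rceil)^+$ and arguing that each move strictly decreases it.

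The main obstacle is this last rebalancing step: proving that the transfer sequence always terminates with $\Phi = 0$. \emph{A priori}, every vertex of an over-full class might have at least two neighbors in every under-full class, blocking every single-vertex swap. Overcoming this seems to require generalising a swap to a Kempe-type exchange of whole tree-branches between two classes, and showing that such an exchange always exists by exploiting the acyclic structure of each $C_i$ together with the degree bound $\Delta(G) < 2k$. This generalised swap operation is the technical heart of the approach and is where the bulk of the work — and the genuine difficulty of the conjecture — will lie.
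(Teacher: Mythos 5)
The statement you are addressing is stated in the paper as a \emph{conjecture}, and the paper does not prove it in full: it establishes only the case $\Delta(G)\geq |G|/2$, via three theorems that build the equitable partition directly from matchings, a long path, or long cycles in the complement $G^c$ (using the Chen--Lih--Wu bound $\alpha'(G^c)\geq\delta(G^c)$ and the fact that four consecutive vertices of a path or cycle in $G^c$ induce a linear forest in $G$). Your dense-regime sketch lives in the same world --- classes of size at most $4$ obtained from structures in $G^c$ --- but the paper constructs the classes from scratch rather than rebalancing an arbitrary tree-coloring, which sidesteps your unproved claim that vertices of over-full classes ``can be exchanged \dots while preserving acyclicity.''

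The genuine gap is the sparse regime $\Delta(G)<n/2$, and you name it yourself: after inserting the minimum-degree vertex $v$ into a class containing at most one neighbour of $v$, you must restore equitability, and you offer no argument that a tree-preserving transfer or Kempe-type branch exchange between an over-full and an under-full class always exists. This is not a technical lemma to be deferred; it is exactly the open content of the conjecture. When classes have size greater than $2$, a vertex of an over-full class may well have two or more neighbours in every under-full class, and nothing in your sketch (the degree bound $\Delta(G)<2k$ included) rules out that every candidate branch exchange creates a cycle in the receiving class. As written, the proposal establishes at most what the paper's three theorems already cover, and leaves the sparse half --- hence the conjecture itself --- unproved.
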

In this note, we establish this conjecture for graphs $G$ with $\Delta(G)\geq |G|/2$.

\section{Main results and the proofs}

For convenience, we set $\Gamma(G)=\lceil\frac{\Delta(G)+1}{2}\rceil$ throughout this section. To begin with, we introduce two useful lemmas of Chen, Lih and Wu.
\begin{lem}{\rm \cite{CLW}}\label{dcon}
If $G$ is a disconnected graph, then $\alpha'(G)\geq \delta(G)$.
\end{lem}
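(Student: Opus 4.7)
The plan is to exploit the disconnectedness of $G$ to piece together matchings from two different components. Write $\delta:=\delta(G)$. Every component $C$ of $G$ satisfies two simple facts: $|V(C)|\geq\delta+1$, since any vertex in $C$ has all of its (at least $\delta$) neighbours inside $C$; and $\delta(C)\geq\delta$, for the same reason. Hence each component is itself a graph with minimum degree $\delta$ on at least $\delta+1$ vertices.

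I would then invoke the classical bound $\alpha'(H)\geq\min\{\delta(H),\lfloor|V(H)|/2\rfloor\}$ for an arbitrary graph $H$. Applying it componentwise gives $\alpha'(C)\geq\min\{\delta,\lfloor|V(C)|/2\rfloor\}\geq\lfloor(\delta+1)/2\rfloor=\lceil\delta/2\rceil$. Since matchings chosen in distinct components are automatically vertex-disjoint and their union is a matching of $G$, summing the bound over any two components yields $\alpha'(G)\geq 2\lceil\delta/2\rceil\geq\delta$, as required.

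The main (in fact only non-trivial) ingredient is the folklore inequality $\alpha'(H)\geq\min\{\delta(H),\lfloor|V(H)|/2\rfloor\}$. If one prefers a self-contained derivation, I would split into two cases for a component $C$. If $|V(C)|<2\delta$, then $\delta(C)\geq\delta\geq|V(C)|/2$, so Dirac's theorem yields a Hamilton cycle and hence a near-perfect matching of size $\lfloor|V(C)|/2\rfloor$. If $|V(C)|\geq 2\delta$, take a maximum matching $M$ of $C$ and suppose for contradiction that $|M|<\delta$. The $M$-unsaturated set $U$ is independent, with $|U|\geq 2$, and each $v\in U$ has at least $\delta>|M|$ neighbours in $V(M)$, so by pigeonhole some matched pair is contained in $N(v)$. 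Absence of $M$-augmenting paths forces, for every matched pair $\{u_i,w_i\}$, the inequality $|N(u_i)\cap U|+|N(w_i)\cap U|\leq|U|$ (either one of the two sets is empty, or both equal a common singleton). Summing over $i$ gives $\delta|U|\leq\sum_{v\in U}\deg(v)\leq|M|\cdot|U|$, contradicting $|M|<\delta$. Once this matching lemma is in hand, the statement of the Lemma follows in two lines from the reduction above.
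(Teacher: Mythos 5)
Your argument is correct. Note, however, that the paper itself offers no proof of this lemma: it is imported verbatim from Chen, Lih and Wu \cite{CLW}, so there is no in-paper argument to compare against. Judged on its own, your reduction is sound and is essentially the standard route: every component $C$ of a disconnected $G$ satisfies $|V(C)|\geq\delta+1$ and $\delta(C)\geq\delta$, hence $\alpha'(C)\geq\min\{\delta,\lfloor|V(C)|/2\rfloor\}\geq\lceil\delta/2\rceil$, and taking matchings in two distinct components gives $\alpha'(G)\geq 2\lceil\delta/2\rceil\geq\delta$. Your self-contained derivation of the ingredient $\alpha'(H)\geq\min\{\delta(H),\lfloor|V(H)|/2\rfloor\}$ also checks out: the Dirac case is fine (the degenerate subcase $|V(C)|\leq 2$ forces $\delta\leq 1$ and is trivial), and in the case $|V(C)|\geq 2\delta$ the counting $\delta|U|\leq e(U,V(M))\leq|M|\,|U|$ with $|U|\geq 2$ correctly contradicts $|M|<\delta$; this second case in fact reproves Lemma~\ref{con} along the way. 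One small remark: the pigeonhole observation that some matched pair lies entirely in $N(v)$ is never used in your counting argument and can be deleted.
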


\begin{lem}{\rm \cite{CLW}}\label{con}
If $G$ is a connected graph such that $|G|>2\delta(G)$, then $\alpha'(G)\geq \delta(G)$.
\end{lem}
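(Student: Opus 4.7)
My approach is by contradiction using the Tutte--Berge formula. Write $n = |G|$ and $\delta = \delta(G)$, and suppose $\alpha'(G) \leq \delta - 1$. Then Tutte--Berge produces a set $U \subseteq V(G)$ with
\[
o(G-U) - |U| \;=\; n - 2\alpha'(G) \;\geq\; n - 2\delta + 2,
\]
and since $o(G-U) - |U|$ has the same parity as $n$, the bound $n - 2\delta + 2 \geq 3$ (which follows from $n > 2\delta$) is actually achieved.

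The next step is the structural observation $(\ast)$: for every component $C$ of $G - U$ and every $v \in C$, the degree bound $\delta \leq d_G(v) \leq (|C|-1) + |U|$ forces $|C| \geq \delta - |U| + 1$. I then split on the size of $|U|$. If $|U| \geq \delta$, the trivial estimate $o(G-U) \leq n - |U|$ combined with $o(G-U) \geq |U| + n - 2\delta + 2$ yields $|U| \leq \delta - 1$, an immediate contradiction. If $|U| = 0$, connectivity of $G$ forces $o(G) \leq 1$, contradicting $o(G) \geq 3$. So I may assume $1 \leq |U| \leq \delta - 1$, in which case $(\ast)$ gives every component of $G-U$ size at least $2$, so
\[
o(G - U) \;\leq\; \frac{n - |U|}{\delta - |U| + 1},
\]
and comparing with the Tutte--Berge lower bound yields
\[
(n - |U|) \;\geq\; (\delta - |U| + 1)\bigl(|U| + n - 2\delta + 2\bigr).
\]

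The final step is an algebraic reduction: writing $n = 2\delta + 1 + m$ with $m \geq 0$, the right-hand side minus the left-hand side factors cleanly as $(|U| + m + 1)(\delta - |U|) + 2$, which is strictly positive because $|U| \leq \delta - 1$ forces $\delta - |U| \geq 1$. This contradiction completes the proof. I anticipate the main obstacle to be the case split around $(\ast)$: one must ensure that the component-size bound is invoked only when $|U| < \delta$ (where it is nontrivial), while the boundary cases $|U| = 0$ and $|U| \geq \delta$ are dispatched separately. The connectivity hypothesis plays a role only in the $|U| = 0$ sub-case, which is exactly where the component-size bound $(\ast)$ alone would be too weak to exclude many small odd components.
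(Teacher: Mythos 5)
The paper does not prove this lemma at all---it is imported verbatim from Chen, Lih and Wu \cite{CLW}---so there is no in-paper argument to compare against; what you have written is a self-contained proof, and it is correct. The Tutte--Berge formula gives a set $U$ with $o(G-U)-|U| = n-2\alpha'(G) \geq n-2\delta+2 \geq 3$ under the assumption $\alpha'(G)\leq\delta-1$; your observation $(\ast)$ that every component $C$ of $G-U$ satisfies $|C|\geq\delta-|U|+1$ is right, the three-way split on $|U|$ is clean (and connectivity is indeed used only to kill the $|U|=0$ case), and I checked the algebra: with $n=2\delta+1+m$ and $u=|U|$, one has $(\delta-u+1)(u+m+3)-(2\delta+1+m-u)=(u+m+1)(\delta-u)+2>0$, contradicting the inequality $n-|U|\geq(\delta-|U|+1)(|U|+n-2\delta+2)$ you derived. (The parity remark in your opening step is harmless but unnecessary; the contradiction never uses it.) For comparison, the original Chen--Lih--Wu proof is more elementary: it takes a maximum matching $M$ with $|M|<\delta$, notes that the at least three $M$-unmatched vertices form an independent set all of whose neighbours are saturated, and uses pigeonhole on the $\leq\delta-1$ edges of $M$ to build an $M$-augmenting path. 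Your route trades that hands-on construction for the black box of Tutte--Berge, which makes the counting transparent at the cost of invoking a heavier theorem; both are legitimate, and yours has the minor virtue of handling Lemma \ref{dcon} (the disconnected case) by essentially the same computation if one simply replaces the $|U|=0$ sub-case.
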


\begin{lem}\label{cycle}
If $G$ is a connected graph with $\delta(G)\geq 2$, then $G$ contains a cycle of length at least $\delta(G)+1$.
\end{lem}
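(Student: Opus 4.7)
The plan is to use the classical longest-path argument. I would pick a longest path $P=v_0v_1\cdots v_k$ in $G$; such a path exists because $G$ is finite. The key observation is that by the maximality of $P$, every neighbor of the endpoint $v_0$ must already lie on $P$: if $v_0$ had a neighbor $u\notin V(P)$, then $uv_0v_1\cdots v_k$ would be a strictly longer path, contradicting the choice of $P$.

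Next I would exploit the degree hypothesis. Since $\deg_G(v_0)\geq\delta(G)\geq 2$, and all neighbors of $v_0$ lie in $\{v_1,\ldots,v_k\}$ (note $v_0$ is not its own neighbor because $G$ is simple), there are at least $\delta(G)$ distinct indices $i\in\{1,\ldots,k\}$ with $v_0v_i\in E(G)$. Let $j$ be the largest such index. Because the set of valid indices has cardinality at least $\delta(G)$ and is contained in $\{1,2,\ldots,k\}$, its maximum satisfies $j\geq\delta(G)$.

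Finally I would close the cycle. The edge $v_0v_j$ together with the subpath $v_0v_1\cdots v_j$ of $P$ forms a cycle $C=v_0v_1\cdots v_jv_0$ of length $j+1\geq\delta(G)+1$, as required.

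I do not expect a genuine obstacle here; the argument is the standard Dirac-type longest-path trick and uses only $\delta(G)\geq 2$ (connectedness is not really needed beyond ensuring the endpoint $v_0$ attains the global minimum-degree bound, which in any case follows from $\delta(G)\geq 2$ applied vertex-wise). The only point that warrants a line of care is verifying that the $\delta(G)$ neighbors of $v_0$ are distinct vertices on $P$ distinct from $v_0$ itself, which is immediate because $G$ is simple and loopless.
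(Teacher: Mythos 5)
Your proposal is correct and is essentially identical to the paper's own proof: both take a longest path $P=v_0v_1\cdots v_k$, observe that all neighbors of $v_0$ lie on $P$ by maximality, and close a cycle through the farthest neighbor $v_j$ with $j\geq\delta(G)$ to obtain length $j+1\geq\delta(G)+1$. Your extra remark that connectedness is not actually needed is also accurate.
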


\begin{proof}
Consider the longest path $P=[v_0v_1\ldots v_k]$ in $G$. We see immediately that $N(v_0)\subseteq V(P)$, because otherwise we would construct a longer path. Let $v_i$ be a neighbor of $v_0$ so that $i$ is maximum. Since $\delta(G)\geq 2$, $C=[v_0v_1\ldots v_iv_0]$ is a cycle of length $i+1\geq \delta(G)+1$.
\end{proof}

In what follows, we prove three independent theorems, which together imply Conjecture \ref{wzlconj} for graphs $G$ with $\Delta(G)\geq |G|/2$.

\begin{thm}
If $\Delta(G)\geq \frac{2}{3}|G|-1$, then $a_{eq}(G)\leq \Gamma(G)$.
\end{thm}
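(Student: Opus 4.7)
Set $n=|G|$ and $k=\Gamma(G)$. From $\Delta(G)\ge\frac{2}{3}n-1$ one obtains $k\ge n/3$, so every part of any equitable $k$-partition of $V(G)$ has size at most $\lceil n/k\rceil\le 3$. If moreover $k\ge\lceil n/2\rceil$ (which happens when $\Delta(G)$ is close to $n-1$), each part has size at most $2$ and so is automatically a forest, and any equitable partition of $V(G)$ into $k$ parts works. Hence the interesting range is $\lceil n/3\rceil\le k<\lceil n/2\rceil$, in which the equitable partition must consist of exactly $a:=n-2k$ parts of size $3$ and $b:=3k-n$ parts of size $2$.

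In this range, $2$-parts are trivially forests, so my plan is to produce $a$ pairwise disjoint triples in $V(G)$, none a triangle of $G$. The simplest way to guarantee a triple is not a triangle is to force one of its three pairs to be a non-edge of $G$. I would therefore look for a matching $\{u_iv_i:1\le i\le a\}$ in $G^c$, adjoin to each pair an arbitrary third vertex $w_i$ chosen from the $n-2a\ge a$ remaining vertices, and then pair the leftover $2b$ vertices into $b$ arbitrary $2$-parts. Each triple $\{u_i,v_i,w_i\}$ then contains the non-edge $u_iv_i$ and so induces a forest, yielding the required equitable tree-coloring.

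Everything thus reduces to the single inequality $\alpha'(G^c)\ge a$, which is exactly what Lemmas~\ref{dcon} and~\ref{con} are designed to give once applied to $G^c$ in place of $G$. The definition of $k$ yields $\delta(G^c)=n-1-\Delta(G)\ge n-2k=a$, so it is enough to find a matching in $G^c$ of size $\delta(G^c)$. To invoke either lemma I need that $G^c$ is disconnected or $|G^c|>2\delta(G^c)$; the latter rearranges to $\Delta(G)>(n-2)/2$, which is implied by the hypothesis $\Delta(G)\ge\frac{2}{3}n-1$. Thus one of Lemmas~\ref{dcon},~\ref{con} applies to $G^c$, giving $\alpha'(G^c)\ge\delta(G^c)\ge a$, as required.

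The only real obstacle I anticipate is conceptual rather than computational: one has to notice that the Chen--Lih--Wu matching lemmas should be applied to the complement $G^c$ and not to $G$ itself. Once that is seen, the two hypotheses of those lemmas are immediate consequences of the defining inequality $k\ge(\Delta(G)+1)/2$ and the density hypothesis $\Delta(G)\ge\frac{2}{3}n-1$ respectively, and the construction of the partition is then just bookkeeping. Lemma~\ref{cycle} does not appear to be needed for this particular theorem and is presumably reserved for the two remaining theorems that together establish Conjecture~\ref{wzlconj} for $\Delta(G)\ge|G|/2$.
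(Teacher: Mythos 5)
Your proposal is correct and follows essentially the same route as the paper: both arguments apply the Chen--Lih--Wu lemmas to $G^c$ (after checking $|G^c|>2\delta(G^c)$) to extract a matching of non-edges of $G$, form $n-2\Gamma(G)$ triples each containing one matched non-edge plus a spare vertex, and pair up the remaining $2(3\Gamma(G)-n)$ vertices. Your quantities $a$ and $b$ are exactly the paper's $\beta$ and $\mu$, and your separate treatment of the trivial case $k\ge\lceil n/2\rceil$ mirrors the paper's handling of $\Delta(G)=|G|-1$.
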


\begin{proof}
If $\Delta(G)=|G|-1$, then $a_{eq}(G)\leq \Gamma(G)$ and this upper bound can be attained by the complete graphs, since we can arbitrarily partition $V(G)$ into $\Gamma(G)$ subsets so that each of them consists of one or two vertices, thus we assume $\Delta(G)\leq |G|-2$.
Since $\Delta(G)+\delta(G^c)=|G|-1$ and $\Delta(G)\geq \frac{2}{3}|G|-1$, $|G^c|\geq 3\delta(G^c)$ and $\delta(G^c)\geq |G^c|-2\Gamma(G)$.
By Lemmas \ref{dcon} and \ref{con}, we have $\alpha'(G^c)\geq \delta(G^c)$, so there exists a matching $M=[x_1y_1,\ldots,x_{\delta}y_{\delta}]$ of size $\delta:=\delta(G^c)$ in $G^c$. Since $|G^c|\geq 3\delta(G^c)$, $|V(G^c)\setminus V(M)|\geq \delta$, thus we can select $\delta$ distinct vertices $z_1,\ldots,z_{\delta}$ among $V(G^c)\setminus V(M)$. Denote $\beta=|G^c|-2\Gamma(G)$ and $\mu=3\Gamma(G)-|G^c|$. Since $|G|-2\geq \Delta(G)\geq \frac{2}{3}|G|-1$, $\beta,\mu\geq 0$. We now use $\beta$ colors to color $3\beta$ vertices of $G$ so that the $i$-th color class consists of the three vertices $x_i,y_i$ and $z_i$, and then use $\mu$ colors to color the remaining $2\mu$ vertices of $G$ so that each color class consists of two vertices. One can check that each color class of $G$ induces a (linear) forest and the coloring of $G$ is equitable. Therefore, $a_{eq}(G)\leq \beta+\mu=\Gamma(G)$.
\end{proof}

\begin{thm}
If $\frac{2}{3}|G|-1>\Delta(G)\geq \frac{2}{3}|G|-2$, then $a_{eq}(G)\leq \Gamma(G)$.
\end{thm}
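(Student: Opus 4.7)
Set $n = |G|$ and $\delta = \delta(G^c) = n - 1 - \Delta(G)$. The hypothesis $\tfrac{2n}{3} - 1 > \Delta \geq \tfrac{2n}{3} - 2$ pins $\Delta$ to a unique value in each residue class of $n$ modulo $3$: writing $n = 3k$, $3k+1$, or $3k+2$, one computes $(\Delta, \Gamma(G), \delta) = (2k-2, k, k+1)$, $(2k-1, k, k+1)$, or $(2k, k+1, k+1)$ respectively. By Lemmas \ref{dcon} and \ref{con}, with the handful of very small $n$ for which $|G^c| \leq 2\delta$ checked by direct inspection, $G^c$ admits a matching $M$ of size at least $k + 1$.

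The plan is to partition $V(G)$ into $\Gamma(G)$ equitable color classes, most of which have the form $\{x_i, y_i, z_i\}$ where $x_iy_i \in M$ and $z_i$ is any third vertex; such a triple contains no triangle of $G$ (since $x_iy_i \in E(G^c)$), hence induces a forest. For the residues $n = 3k$ and $n = 3k + 2$, the required class sizes are $k$ triples, or $k$ triples plus one pair, respectively, and the argument of the previous theorem applies essentially verbatim: pick any $k$ edges of $M$ and distribute the $n - 2k \in \{k, k+2\}$ leftover vertices as the $z_i$'s, reserving the last two as a pair when $n = 3k + 2$.

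The main obstacle is the residue $n = 3k + 1$, where $\Gamma(G) = k$ forces the partition to consist of exactly one $4$-class and $k - 1$ triples. I will build the $4$-class from two matching edges $e_1 = x_1y_1, e_2 = x_2y_2 \in M$ joined by at least one $G^c$-cross-edge, say $x_1x_2 \in E(G^c)$; the induced $G$-subgraph on $\{x_1, y_1, x_2, y_2\}$ is then bipartite between $\{x_1, y_1\}$ and $\{x_2, y_2\}$ (hence triangle-free) and is missing the cross edge $x_1x_2$, so it has at most three edges and no $C_4$---it is therefore a forest.

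To see that such a pair of matching edges exists, suppose toward a contradiction that every endpoint of $M$ had no $G^c$-neighbour in $V(M)$ other than its mate. Then a fixed endpoint $x_1$ would have $\geq k$ remaining $G^c$-neighbours, all lying in $V(G^c) \setminus V(M)$; but this set has cardinality at most $n - 2|M| \leq 3k + 1 - 2(k+1) = k - 1$, which is a contradiction. After removing the chosen $4$-class, the matching $M \setminus \{e_1, e_2\}$ still supplies at least $k - 1$ edges, and the remaining $n - 4 - 2(k-1) = k - 1$ free vertices slot in exactly as the $z_i$'s, producing the desired equitable $\Gamma(G)$-tree-coloring.
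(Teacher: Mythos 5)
Your proof is correct and follows essentially the same route as the paper: split on $|G| \bmod 3$, extract a matching of size $\delta(G^c)=k+1$ in $G^c$ via Lemmas \ref{dcon} and \ref{con}, and build color classes as triples (plus one pair, or one $4$-class obtained from two matching edges joined by a $G^c$-cross-edge) each containing enough non-edges of $G$ to induce a forest. Your counting argument in the $n=3k+1$ case in fact shows that the paper's second subcase there (where $x_{31}$ has no $G^c$-neighbour among the other matched vertices) cannot occur, which slightly streamlines the same construction.
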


\begin{proof}
If $|G|\leq 3$, then the result is trivial, so we assume $|G|\geq 4$. If $|G|=3k$, then $\Delta(G)=2k-2$ and $\delta(G^c)=k+1$, since $\frac{2}{3}|G|-1>\Delta(G)\geq \frac{2}{3}|G|-2$ and $\Delta(G)+\delta(G^c)=|G|-1$. By Lemmas \ref{dcon} and \ref{con}, we have $\alpha'(G^c)\geq \delta(G^c)>k$. Let $M_1=[x_{11}y_{11},\ldots,x_{1k}y_{1k}]$ be a matching of $G^c$. We now partition the vertices of $G$ into $k$ subsets so that the $i$-th
subset consists of the vertices $x_{1i},y_{1i}$ and one another vertex different from the vertices in $V(M_1)$. It is easy to check that this is an equitable partition so that each subset induces a (linear) forest, therefore, $a_{eq}(G)\leq k=\Gamma(G)$. If $|G|=3k+2$, then $\Delta(G)=2k$ and $\delta(G^c)=k+1$. This also implies, by Lemma \ref{dcon} and \ref{con}, that $\alpha'(G^c)\geq \delta(G^c)>k$. Let $M_2=[x_{21}y_{21},\ldots,x_{2k}y_{2k}]$ be a matching of $G^c$. We now partition the vertices of $G$ into $k+1$ subsets so that the $i$-th
subset with $i\leq k$ consists of the vertices $x_{2i},y_{2i}$ and one another vertex different from the vertices in $V(M_2)$ and the $(k+1)$-th subset consists of two vertices in $V(G)\setminus V(M_2)$. It is easy to check that this is an equitable partition so that each subset induces a (linear) forest, therefore, $a_{eq}(G)\leq k+1=\Gamma(G)$. If $|G|=3k+1$, then $\Delta(G)=2k-1$ and $\delta(G^c)=k+1$. By Lemmas \ref{dcon} and \ref{con}, we have $\alpha'(G^c)\geq \delta(G^c)$. Let $M_3=[x_{31}y_{31},\ldots,x_{3(k+1)}y_{3(k+1)}]$ be a matching of $G^c$. If $x_{31}$ has a neighbor in $G^c$ among $\{x_{32},y_{32},\ldots,x_{3(k+1)},y_{3(k+1)}\}$ (without loss of generality, assume that $x_{31}x_{32}\in E(G^c)$), then
we can partition the vertices of $G$ into $k$ subsets so that the the first subset consists of the four vertices $x_{31},y_{31},x_{32}$ and $y_{32}$ and
the $i$-th subset with $2\leq i\leq k$ consists of the vertices $x_{3(i+1)},y_{3(i+1)}$ and one another vertex different from the vertices in $V(M_2)$. One can check that this is an equitable partition so that each subset induces a (linear) forest, therefore, $a_{eq}(G)\leq k=\Gamma(G)$. Hence, we shall assume that $x_{31}x_{3j},x_{31}y_{3j}\not\in E(G^c)$ for each $2\leq j\leq k+1$. Since $d_{G^c}(x_{31})\geq \delta(G^c)=k+1$ and $|G^c|=3k+1$, $x_{31}z\in E(G^c)$ for each $z\in V(G^c)\setminus V(M_3)$. Similarly, we shall assume that $y_{31}z\in E(G^c)$ for each $z\in V(G^c)\setminus V(M_3)$, because otherwise we return to a case we have considered before. We now partition the vertices of $G$ into $k$ subsets so that the the first subset consists of the two vertices $x_{31},y_{31}$ and two distinct vertices $z_1,z_2\in V(G^c)\setminus V(M_3)$ and
the $i$-th subset with $2\leq i\leq k$ consists of the vertices $x_{3i},y_{3i}$ and one another vertex different from the vertices in $V(M_2)$. One can again check that this is an equitable partition so that each subset induces a (linear) forest, therefore, $a_{eq}(G)\leq k=\Gamma(G)$.
\end{proof}

\begin{thm}
If $\frac{2}{3}|G|-2>\Delta(G)\geq \frac{1}{2}|G|$, then $a_{eq}(G)\leq \Gamma(G)$.
\end{thm}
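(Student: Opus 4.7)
Writing $n=|G|$ and $\delta=\delta(G^c)=n-1-\Delta(G)$, the hypothesis $\tfrac12 n\le\Delta(G)<\tfrac23 n-2$ becomes $\tfrac{n}{3}+1<\delta\le\tfrac{n-2}{2}$, whence $3\Gamma(G)<n\le 4\Gamma(G)-2$. Consequently an equitable $\Gamma(G)$-tree-coloring must consist of exactly $t:=n-3\Gamma(G)\ge 1$ classes of size $4$ and $s:=4\Gamma(G)-n\ge 2$ classes of size $3$, with $s+t=\Gamma(G)$. Since $n>2\delta$, Lemmas~\ref{dcon} and~\ref{con} yield $\alpha'(G^c)\ge\delta\ge n-2\Gamma(G)$, so I would pick a matching $M=\{x_1y_1,\ldots,x_my_m\}$ of $G^c$ with $m=n-2\Gamma(G)=s+2t$ and set $Z=V(G)\setminus V(M)$, with $|Z|=s$.

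The construction makes each triple $\{x_i,y_i,z\}$ for some $z\in Z$ and each quadruple $\{x_i,y_i,x_j,y_j\}$ for two distinct matching edges $e_i,e_j\in M$. A triple of this form automatically induces a forest in $G$, because $x_iy_i\in E(G^c)$ forbids the triangle. The structural heart of the argument is the following case-check: among the five four-vertex $G^c$-patterns whose complement on four vertices is not a forest---namely $\emptyset$, $K_2$, $P_3+K_1$, $2K_2$, and $K_{1,3}$---only $2K_2$ is compatible with the two vertex-disjoint matching edges $x_iy_i$ and $x_jy_j$ both lying in $G^c$. Hence the quadruple $\{x_i,y_i,x_j,y_j\}$ induces a forest in $G$ iff $G^c$ contains at least one cross-edge between $V(e_i)$ and $V(e_j)$. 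Define the auxiliary graph $H$ on vertex set $M$ by putting $e_ie_j\in E(H)$ iff such a cross-edge exists; the task reduces to finding a matching of size $t$ in $H$, for then the $t$ matched pairs become the quadruples and the $s$ unmatched vertices of $H$ pair bijectively with $Z$ to form the triples.

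\textbf{The main technical step} is extracting the matching of size $t$ from $H$. For the minimum degree estimate: each $e_i\in V(H)$ has $x_i,y_i$ emitting at least $2\delta-2$ $G^c$-edges leaving $V(e_i)$, at most $2s$ of which land in $Z$, so at least $2\delta-2-2s$ reach $V(M)\setminus V(e_i)$; since each other matching edge absorbs at most $4$ such endpoints, $d_H(e_i)\ge\lceil(\delta-1-s)/2\rceil$. A parity check, after substituting $s=4\Gamma(G)-n$, shows this lower bound equals $t$ exactly. Combined with $|V(H)|=s+2t\ge 2t+2$, Tutte--Berge yields $\nu(H)\ge t$: for any $U\subseteq V(H)$ with $|U|=u\le t-1$, the subgraph $H-U$ has minimum degree $\ge t-u$, so each of its odd components has at least $t-u+1$ vertices, giving $o(H-U)\le(s+2t-u)/(t-u+1)$; the resulting inequality $o(H-U)-u\le s$ follows from the quadratic identity $-u^2+u(t-s+2)+t(s-2)\ge 0$ on $[0,t]$ (whose roots are $2-s$ and $t$), while the range $u\ge t$ is handled by the trivial bound $o(H-U)\le|V(H)|-u$. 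The delicate point is that the degree bound $d_H(e_i)\ge t$ is exactly tight, so this Tutte--Berge argument must be executed with no slack.
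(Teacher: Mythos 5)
Your proof is correct, but it takes a genuinely different route from the paper. The paper splits on whether $G^c$ is connected: in the connected case it invokes the classical fact that a connected graph with $|G^c|>2\delta(G^c)$ contains a path on $2\delta(G^c)+1$ vertices and chops that path into $\beta$ consecutive blocks of four and $\mu$ consecutive blocks of two (padding the latter with leftover vertices); in the disconnected case it extracts a long cycle from each component via Lemma~\ref{cycle}, cuts blocks of four from the cycles, and uses a counting claim to find a matching of size $\mu$ among the remaining vertices. You instead give a single unified argument with no connectivity case split: a matching of size $s+2t$ in $G^c$ (your $s,t$ are the paper's $\mu,\beta$), the observation that a triple containing a matching edge always induces a forest while a quadruple formed from two matching edges induces a forest iff $G^c$ has a cross-edge between them, and then a Tutte--Berge argument on the auxiliary graph $H$ whose minimum degree you bound below by $t$. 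I checked the delicate points: $d_H(e_i)\ge\lceil(\delta-1-s)/2\rceil\ge\lceil(2t-1)/2\rceil=t$ using $\delta\ge s+2t$, the factorization $-u^2+u(t-s+2)+t(s-2)=-(u-t)\bigl(u-(2-s)\bigr)\ge 0$ on $[0,t]$ (valid since $s\ge 2$, which follows from $4\Gamma(G)\ge|G|+2$), and the four-vertex case analysis; all are sound. Your approach buys uniformity and avoids the long-path/long-cycle existence results, at the cost of a tight extremal argument with no slack in the degree bound; the paper's approach is more elementary in each case but needs two separate constructions and the auxiliary Claim~\ref{cm}. One cosmetic remark: your degree bound is ``at least $t$'' rather than ``exactly $t$'' (it can exceed $t$ when $\delta>s+2t$), but that only helps you.
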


\begin{proof}
Since $\Delta(G)+\delta(G^c)=|G|-1$ and $\Delta(G)\geq \frac{1}{2}|G|$, $|G^c|\geq 2\delta(G^c)+2$. We split our proof into two cases.\\[.5em]
\noindent\emph{Case 1: $G^c$ is connected.}\\[.5em]
\indent Since $|G^c|\geq 2\delta(G^c)+2>2\delta(G^c)$, there exists a path $P=[x_0,x_1,\ldots,x_{2\delta}]$ of length $2\delta:=2\delta(G^c)$ in $G^c$ (see [1, Exercise 4.2.9]). Denote $\beta=|G|-3\Gamma(G)$ and $\mu=4\Gamma(G)-|G|$. Since $\frac{2}{3}|G|-2>\Delta(G)\geq \frac{1}{2}|G|$, $\beta,\mu\geq 1$.
Since $2\Gamma(G)>\Delta(G)=|G|-\delta(G^c)-1$, $\delta(G^c)\geq |G|-2\Gamma(G)=2\beta+\mu$. Thus, the vertex sets
$V_i=\{x_{4i-4},x_{4i-3},x_{4i-2},x_{4i-1}\}$ with $1\leq i\leq \beta$ and $U_i=\{x_{4\beta+2i-2},x_{4\beta+2i-1}\}$ with $1\leq i\leq \mu$ are well defined. Note that
$V(P)\supseteq \bigcup_{i=1}^{\beta}V_i \cup \bigcup_{i=1}^{\mu}U_i$. Since $|G|-4\beta-3\mu=\mu$, $|G-\bigcup_{i=1}^{\beta}V_i - \bigcup_{i=1}^{\mu}U_i|=\mu$.
Let $V(G)\setminus \left(\bigcup_{i=1}^{\beta}V_i \cup \bigcup_{i=1}^{\mu}U_i\right)=\{y_1,\ldots,y_{\mu}\}$ and let $W_i=U_i \cup \{y_i\}$ with $1\leq i\leq \mu$. We now partition the vertices of $G$ into $\beta+\mu$ subsets $V_1,\ldots,V_{\beta},W_1,\ldots,W_{\mu}$. One can check that this is an equitable partition so that each subset induces a (linear) forest, therefore, $a_{eq}(G)\leq \beta+\mu=\Gamma(G)$.\\[.5em]
\noindent\emph{Case 2: $G^c$ is disconnected.}\\[.5em]
\indent Let $G_1,\ldots,G_t$ be the components of $G^c$ with $t\geq 2$. Since $\Delta(G)+\delta(G^c)=|G|-1$ and $\Delta(G)<\frac{2}{3}|G|-2$, $\min\{\delta(G_1),\dots.\delta(G_t)\}\geq \delta(G^c)\geq 2$. This implies, by Lemma \ref{cycle}, that $G_i$ contains a cycle $C_i=[x^i_0 x^i_1 \ldots x^i_{l(C_i)} x^i_0]$ of length $l(C_i)+1\geq \delta(G_i)+1$ for each $1\leq i\leq t$. Let $V^i_j=\{x^i_{4j-4},x^i_{4j-3},x^i_{4j-2},x^i_{4j-1}\}$
with $1\leq i\leq t$ and $1\leq j\leq n_i$, in which $4n_i-1\leq l(C_i)$ and $n_1+\ldots+n_t=\beta$. Note that $V^i_j$ is well defined by Claim \ref{cm}.

\begin{claim}\label{cm}
$\Sigma_{i=1}^t \lfloor\frac{\delta(G_i)+1-4n_i}{2}\rfloor \geq \mu$.
\end{claim}

\begin{proof}
Otherwise, $\delta(G^c)\leq \frac{1}{2}t\delta(G^c)\leq \frac{1}{2}\Sigma_{i=1}^t \delta(G_i)\leq\Sigma_{i=1}^t \lfloor\frac{\delta(G_i)+1}{2}\rfloor< 2\beta+\mu=|G|-2\Gamma(G)\leq |G|-\Delta(G)-1$, contradicting to $\Delta(G)+\delta(G^c)=|G|-1$.
\end{proof}

We conclude, by Claim \ref{cm}, that there exists a matching $M$ of size at least $\mu$ in $G^c-\bigcup_{i=1}^t \bigcup_{j=1}^{n_i} V^i_j$.
Therefore, we can partition the vertices of $G$ into $\beta+\mu$ subsets so that the $i$-th subset with $1\leq i\leq \mu$ consists of a pair of vertices matched under $M$ and one vertex in $V(G)\setminus \left(V(M)\cup \bigcup_{i=1}^t \bigcup_{j=1}^{n_i} V^i_j \right)$ and the last $\beta$ subsets are $V^1_1,\ldots,V^1_{n_1},\ldots, V^t_{1},\ldots, V^t_{n_t}$. One can check that this is an equitable partition so that each subset induces a (linear) forest, therefore, $a_{eq}(G)\leq \beta+\mu=\Gamma(G)$.
\end{proof}

From the proofs of the above three theorems, we can immediately deduce the following conclusions.

\begin{con}
If $G$ is a simple graph with $\Delta(G)\geq \frac{1}{2}|G|$, then $|V(G)|$ can be equitably partitioned into $\Gamma(G)$ subsets so that each of them induces a linear forest of $G$, i.e., the equitable linear vertex arboricity of $G$ is at most $\Gamma(G)$, and the upper bound $\Gamma(G)$ is sharp.
\end{con}

\begin{con}
An equitable $\Gamma(G)$-tree-coloring of any simple graph $G$ can be constructed in linear time.
\end{con}

\end{document}